\newtheorem{thr}{Theorem}
\newtheorem{cor}[thr]{Corollary}
\newtheorem{stat}[thr]{Proposition}
\newtheorem{obs}[thr]{Observation}
\newtheorem{claim}[thr]{Claim}
\theoremstyle{definition}
\theoremstyle{remark}
\numberwithin{equation}{section}
\begin{document}%\color{blue}

% \title[short text for running head]{full title}
%\title[A tropical approach to supertropical linear algebra]{A tropical approach to \\ supertropical linear algebra}
%\title{A $0.1654n^3$ upper bound for the reset threshold of an $n$-state synchronizing automaton}
\title[An upper bound for synchronizing words of finite automata]{An improvement to a recent upper bound for synchronizing words of finite automata}

%    author one information
% \author[short version for running head]{name for top of paper}
\author{Yaroslav Shitov}
%\address{National Research University Higher School of Economics, 20 Myasnitskaya Ulitsa, Moscow 101000, Russia}
\email{yaroslav-shitov@yandex.ru}

%    \subjclass is required.
\subjclass[2010]{68Q70}

\keywords{Automata theory, \v{C}ern\'{y} conjecture}

%\date{October 3, 2011 and, in revised form, February 22, 2012}

%\dedicatory{}

%    "Communicated by" -- provide editor's name; required.
%\commby{Jim Haglund}

%    Abstract is required.
\begin{abstract}
It has been known since the 60's that any complete discrete $n$-state automaton admits a reset word of length not exceeding $\alpha n^3+o(n^3)$ for some absolute constant $\alpha$. J.-E.~Pin and P.~Frankl proved this statement with $\alpha=1/6=0.1666...$ in 1982, and this bound remained best known until 2017, when M. Szyku\l{}a decreased its value to $\alpha\approx0.1664$. In this note, we present a modification to the latest approach and develop a different counting argument which leads to a more substantial improvement of $\alpha\leqslant 0.1654$.
\end{abstract}

\maketitle

%    Text of article.

%    Bibliographies can be prepared with BibTeX using amsplain,
%    amsalpha, or (for "historical" overviews) natbib style.

\section{Introduction}

Let $\mathcal{A} = (Q, \Sigma, \delta)$ be a \textit{deterministic finite automaton}, where $Q$ is a finite set of \textit{states}, $\Sigma$ is a finite \textit{alphabet}, and $\delta: Q\times\Sigma\to Q$ is a \textit{transition function}, which assigns a mapping $Q\to Q$ to every letter of $\Sigma$. This function naturally extends to an action $Q\times\Sigma^*\to Q$ of the free monoid $\Sigma^*$ on $Q$, and this action is still denoted by $\delta$. For a subset $S\subseteq Q$ and a word $w\in\Sigma^*$, we define $S\cdot w$ as the set of all images $s\cdot w$ of elements $s\in S$ under $w$. The cardinality of $Q\cdot w$ is called the \textit{rank} of a word $w$, and the \textit{rank of an automaton} is defined as the smallest possible rank of a word. An automaton $\mathcal{A}$ of rank one is called \textit{synchronizing}, and the length of the shortest rank-one words is called the \textit{reset threshold} of $\mathcal{A}$ and denoted by $\operatorname{rt}(\mathcal{A})$.

\smallskip

Upper bounds for reset thresholds of synchronizing automata were a topic of extensive research in the last 50 years, and one of the main goals of this study is a famous conjecture stating that $\operatorname{rt}(\mathcal{A})\leqslant (n-1)^2$ for any synchronizing $n$-state automaton $\mathcal{A}$; this statement was considered many years ago by different authors and became known as the \textit{\v{C}ern\'{y} conjecture} (see a historical survey in~\cite{Volkov}). There are a lot of progress on this question for different special classes of automata~\cite{Pin2, Stein, TrahAper}, but the general version of the \v{C}ern\'{y} conjecture remains wide open. The cubic upper bounds on the reset threshold, that is, inequalities of the form $\operatorname{rt}(\mathcal{A})\leqslant \alpha n^3 +o(n^3)$ for some fixed $\alpha$, have been known since 1966, see~\cite{Starke}. After a series of improvements~\cite{Cer, Cer2, Pin1, Pin2}, the progress stuck for 35 years on the celebrated $\alpha=1/6=0.1666...$ bound of J.-E. Pin and P. Frankl~\cite{Frankl, Pin2}. In 2011, A. Trahtman~\cite{Trah} discovered an idea of how to find a relatively short word of rank at most $n/2$, and M. Szyku\l{}a~\cite{Sz} combined it with a neat linear algebraic argument and finally improved the upper bound to $\alpha\approx 0.1664$ in 2017. The purpose of this note is to modify the approach of~\cite{Sz} and get a more substantial improvement of $\alpha\leqslant 0.1654$.

\section{Modifying the method}

From now on, we denote by $\mathcal{A}=(Q,\Sigma,\delta)$ a synchronizing automaton with $n$ states, and we define the \textit{corank} of a word $w\in\Sigma^*$ as $n-\operatorname{rk}w$. Our aim is to give a relevant modification of the following proposition, which plays a crucial role in~\cite{Sz}.

\begin{thr}\label{thrSz1}\normalfont{(Lemma~2 in~\cite{Sz}.)}
\textit{Let $A$ and $S$ be subsets of $Q$ satisfying $\varnothing\subsetneq A\subsetneq S$. Suppose that there is a word $w\in \Sigma^*$ such that $A\nsubseteq S \cdot w$. Then there exists a word $w$ of length at most $n-|A|$ satisfying either (1) $A\nsubseteq S\cdot w$ or (2) $|S\cdot w|<|S|$.}
\end{thr}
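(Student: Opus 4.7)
The plan is to work via a shortest counterexample in the power automaton. Let $w = \sigma_1 \cdots \sigma_m$ be a shortest word satisfying either (1) $A \nsubseteq S \cdot w$ or (2) $|S \cdot w| < |S|$; such a $w$ exists by the hypothesis. Setting $T_i := S \cdot (\sigma_1 \cdots \sigma_i)$, the minimality of $m$ forces $A \subseteq T_i$ and $|T_i| = |S|$ for every $i < m$, and a standard shortcutting argument (delete $\sigma_{i+1} \cdots \sigma_j$ whenever $T_i = T_j$ with $i < j$) ensures that $T_0, T_1, \ldots, T_{m-1}$ are pairwise distinct. The task thus reduces to showing $m \leqslant n - |A|$.

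Because $|T_{i-1}| = |T_i| = |S|$, each letter $\sigma_i$ restricts to a bijection $T_{i-1} \to T_i$; since $T_i \neq T_{i-1}$, the set $T_i \setminus T_{i-1}$ is nonempty, and since $A \subseteq T_{i-1}$ it lies entirely in $Q \setminus A$. The strategy is then to select witness elements $x_0 \in S \setminus A$ and, for $1 \leqslant i \leqslant m - 1$, $x_i \in T_i \setminus T_{i-1}$, all in $Q \setminus A$, in such a way that they are pairwise distinct. This would immediately yield $m \leqslant |Q \setminus A| = n - |A|$.

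The main obstacle will be establishing the pairwise distinctness, since an element of $Q \setminus A$ can in principle enter and leave the $T_i$'s repeatedly along the path, so a greedy choice may fail. I would attack this with Hall's marriage theorem on the bipartite graph whose left vertices are $\{0, \ldots, m-1\}$, right vertices are $Q \setminus A$, and edges encode the candidate sets above. Verifying Hall's condition reduces to a lower bound on $|N(I)|$ for every $I$: elements of $Q \setminus A$ outside $N(I)$ must remain constant throughout the $T_j$ relevant to $I$, so the $T_j$ yield $|I|$ distinct patterns on $N(I)$; the bijective structure of each $\sigma_i|_{T_{i-1}}$ should sufficiently constrain how these patterns may evolve from step to step to force $|N(I)| \geqslant |I|$. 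If the raw counting turns out to be too loose, a refinement would track the "leaving" element in $T_{i-1} \setminus T_i$ in parallel, or proceed by induction on $|Q \setminus A|$, peeling off one element of $Q \setminus A$ at a time by adjoining it to $A$ and invoking the inductive hypothesis.
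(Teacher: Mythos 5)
This statement is not proved in the paper at all: it is quoted verbatim from Lemma~2 of \cite{Sz}, where it is established by a linear-algebraic dimension argument (an ascending chain of subspaces attached to words of length at most $i$, whose relevant increments are supported on $Q\setminus A$ under the negation of the conclusion, whence the bound $n-|A|$), not by a combinatorial argument of the kind you propose. Your setup is sound as far as it goes: the passage to a shortest witness $w$, the shortcutting making $T_0,\dots,T_{m-1}$ pairwise distinct, the bijectivity of each $\sigma_i$ on $T_{i-1}$, and the inclusion $T_i\setminus T_{i-1}\subseteq Q\setminus A$ are all correct. But the proof stops exactly where the theorem begins: the entire content of the bound $m\leqslant n-|A|$ is the pairwise distinctness of your witnesses $x_0,\dots,x_{m-1}$, and this is not proved --- Hall's condition is asserted to follow from ``the bijective structure,'' with an explicit fallback clause in case ``the raw counting turns out to be too loose.''

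Moreover, the gap is not just a missing verification. Hall's condition for the bipartite graph you define can fail for data satisfying every property you have derived. Take $Q=\{a,1,2,3,4\}$, $A=\{a\}$, $T_0=S=\{a,1,2\}$, $T_1=\{a,3,4\}$, $T_2=\{a,1,3\}$, $T_3=\{a,2,3\}$: these are pairwise distinct, of equal size, all contain $A$, and consecutive ones are linked by bijections extendable to maps $Q\to Q$. Your candidate sets are $D_0=\{1,2\}$, $D_1=\{3,4\}$, $D_2=\{1\}$, $D_3=\{2\}$, and $I=\{0,2,3\}$ gives $N(I)=\{1,2\}$, so $|N(I)|<|I|$ and no system of distinct representatives exists (here $m=4=n-|A|$, so the theorem itself is not contradicted --- only your route to it). Your sketched verification is also internally flawed: an element outside $N(I)$ is merely one that never \emph{enters} a $T_i$ at a step $i\in I$; it can still enter and leave at the other steps, so the sets $T_j$ need not agree off $N(I)$, and even if they did, ``at most $2^{|N(I)|}$ patterns'' would only yield $|N(I)|\geqslant\log_2|I|$, far short of Hall's condition. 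To rescue this line you would need to exploit the minimality of $m$ over \emph{all} words rather than only over subwords of $w$, or some global property of the letters as maps of $Q$; as written, the argument does not prove the statement.
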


%The following statement does not appear in his paper explicitly, but we reformulate it in a way that is more convenient in our argument below.

%\begin{obs}\label{obsFrob}
%Let $u,v,w\in\Sigma^*$ and $\operatorname{rk}(u\cdot v\cdot w)<\operatorname{rk}(u\cdot v)$. Then $\operatorname{rk}(v\cdot w)<\operatorname{rk}(v)$.
%\end{obs}

%\begin{proof}
%Follows from the Frobenius inequality $\operatorname{rk}(AB)-\operatorname{rk}(ABC)\leqslant\operatorname{rk}(B)-\operatorname{rk}(BC)$ valid for any triple $(A,B,C)$ of $n\times n$ square matrices.
%\end{proof}

%As we will see, the construction of short words $w'$ satisfying $A\nsubseteq S\cdot w'$ is important for reducing the
In~\cite{Sz}, a successive application of Theorem~\ref{thrSz1} was used to construct a word $\omega$ that satisfies $A\nsubseteq S\cdot \omega$. At the first glance, the following theorem may look like a mere reformulation of this technique avoiding a direct mention of a successive application. However, it gives a more explicit description of a desired word $\omega$ which will give a significant improvement on the bound of~\cite{Sz} later in this note.

\begin{thr}\label{thrmy2}%\normalfont{(See~\cite{Sz}.)}
Let $u\in\Sigma^*$ be a word of length $l$ and corank $r\in[1, n/2-1]$. Assume that, for an integer $\lambda$, there exists a word $v$ of length $\lambda$ such that $\operatorname{rk}v\leqslant\operatorname{rk}v'$ for any word $v'$ of length at most $\lambda+2r$. Then there is a word of length at most $l+\lambda+2r$ and corank at least $r+1$.
\end{thr}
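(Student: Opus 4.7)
My plan is to split on whether the minimum-rank word $v$ already reduces the image of $u$. Write $S := Q \cdot u$ with $|S| = n-r$, and $T := Q \cdot v$. If $v$ fails to act injectively on $S$ --- as is forced in particular whenever $\operatorname{rk} v \leq n - r - 1$, since $S \cdot v \subseteq T$ --- then $|S \cdot v| < |S|$, and the word $uv$ (of length $l + \lambda$) already has corank at least $r+1$. So one may assume $v$ is injective on $S$ and $|T| \geq n-r$; the minimality of $v$ then implies that every word of length at most $\lambda + 2r$ has rank at least $|T|$.

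In this main case, I apply Theorem~\ref{thrSz1} to the pair $(S, A)$ with $A := S \cap T$. Inclusion-exclusion gives $|A| \geq n - 2r$, and the hypothesis $A \nsubseteq S \cdot w$ is witnessed by any synchronizing word (available since $|A| \geq 2$ throughout the range $r \leq n/2 - 1$). Theorem~\ref{thrSz1} thus produces a word $w$ of length at most $2r$ satisfying either (i) $|S \cdot w| < |S|$, giving $uw$ of length $l + 2r$ with corank at least $r+1$, or (ii) $|S \cdot w| = |S|$ with some $a \in A$ lying outside $S \cdot w$.

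In subcase (ii), the candidate is $uwv$, of length at most $l + \lambda + 2r$. To bound its rank I would use the minimality of $v$ applied to $wv$: since $|wv| \leq \lambda + 2r$, we have $\operatorname{rk}(wv) \geq |T|$ and hence $(Q \cdot w) \cdot v = T$, so $v$ is surjective from $Q \cdot w$ onto $T$ and every $v$-fiber meets $Q \cdot w$. The $r$-element complement $Q \setminus S \cdot w$, which contains $a \in A \cap T$, then determines how many $v$-fibers are entirely missed by $S \cdot w$; a careful fiber-count should yield $|(S \cdot w) \cdot v| \leq n - r - 1$, placing $uwv$ at the required corank.

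The main obstacle is precisely this fiber-count. When $|T| = n - r$ a single missed fiber suffices, and the element $a$ provides one whenever its $v$-fiber is a singleton --- an event controllable by a pigeonhole estimate on singleton fibers intersecting $S$ (using $|T| < n$, which follows from the automaton being synchronizing). In the general regime $|T| > n-r$ or for $r$ close to the boundary $n/2 - 1$, the argument will likely need refinement, either by replacing $A = S \cap T$ with a union of full singleton $v$-fibers lying in $S$, or by iterating Theorem~\ref{thrSz1} with careful bookkeeping of the length bound. The edge case $S \subseteq T$, in which $A$ would equal $S$ and Theorem~\ref{thrSz1} would not directly apply, will be handled by a slight perturbation of $A$.
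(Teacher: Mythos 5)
Your outer structure (the easy case where $v$ already collides two points of $Q\cdot u$, the appeal to Theorem~\ref{thrSz1} to produce a $w$ of length at most $2r$, and the length bookkeeping $l+\lambda+2r$) is sound, but the main case contains a genuine gap that you yourself flag as ``the main obstacle,'' and it is not fixable within your setup. The problem is the order of composition: you build $uwv$ and need $|(Q\cdot u\cdot w)\cdot v|\leqslant n-r-1$, which requires forcing an actual collision of $v$ on the $(n-r)$-element set $S\cdot w$. Knowing only that some $a\in T\setminus(S\cdot w)$ exists tells you nothing about the $v$-preimage of any point, so no fiber count follows; and even the refinement you suggest (taking $A$ to be a union of singleton $v$-fibers, so that a whole fiber is missed) only yields $|(S\cdot w)\cdot v|\leqslant|T|-1$, which is useless whenever $|T|>n-r$ --- and $|T|$ can be as large as $n-1$. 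To push the image below $n-r$ you would need $S\cdot w$ to miss $|T|-(n-r)+1$ entire fibers, which nothing in your construction provides.

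The paper's proof reverses the order and uses a different $A$: it takes $A$ to be the union of the \emph{singleton preimages under $u$} (Lemma~7 of~\cite{Sz} gives $|A|\geqslant n-2r$) and applies Theorem~\ref{thrSz1} with $S:=Q\cdot v$, so that alternative~(2) of that theorem, $|Q\cdot v\cdot w|<|Q\cdot v|$, is outright impossible by the minimality hypothesis on $v$ (since $|vw|\leqslant\lambda+2r$), leaving a $w$ with $A\nsubseteq Q\cdot v\cdot w$. The final word is $vwu$, not $uwv$: if $a\in A$ lies outside $Q\cdot v\cdot w$, then $a\cdot u$ has no preimage under $u$ other than $a$ itself, hence $a\cdot u\in Q\cdot u\setminus(Q\cdot v\cdot w\cdot u)$ and $\operatorname{rk}(vwu)\leqslant n-r-1$. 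The asymmetry is decisive: applying $u$ last caps the image at $|Q\cdot u|=n-r$ automatically, so one missed singleton fiber suffices, whereas applying $v$ last forces you to manufacture a collision. Your edge cases ($A=Q\cdot v$ or $A\nsubseteq Q\cdot v$) are handled in the paper by taking $w$ to be a suitable letter or the empty word, using that the automaton is synchronizing and $|A|\geqslant 2$.
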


\begin{proof}
%We are already done if $\operatorname{rk}(u\cdot v)<n-r$, so we can assume without loss of generality that $\operatorname{rk}(u\cdot v)=n-r$. Let $A\subset Q\cdot u\cdot v$ be the set of states whose preimage under the mapping $q\to q\cdot u\cdot v$ is unique. According to Lemma~7 in~\cite{Sz}, one has $|A|\geqslant n-2r$. Using Theorem~\ref{thrSz1}, one finds a word $w$ of length at most $2r$ such that either (1) $A\nsubseteq Q\cdot u\cdot v\cdot w$ or (2) $|Q\cdot u\cdot v\cdot w|<|Q\cdot u\cdot v|$. The possibility (2) implies $\operatorname{rk}(v\cdot w)<\operatorname{rk} (v)$ by Observation~\ref{obsFrob}, and so it does not realize by the assumption of the current theorem. So we have $A\nsubseteq Q\cdot u\cdot v\cdot w$
For a state $\sigma$ in $Q\cdot u$, we denote by $\sigma\cdot u^{-1}$ the preimage of $\sigma$ under the mapping $q\to q\cdot u$. Let $A$ be the union of all those preimages $\sigma\cdot u^{-1}$ which are singleton sets; according to Lemma~7 in~\cite{Sz}, one has $|A|\geqslant n-2r$.

Now we want to find a word $w$ of length at most $2r$ such that $A\nsubseteq Q\cdot v\cdot w$, which would allow us to find an element $a\in A$ satisfying $a\notin Q\cdot v\cdot w$, which would imply $a\cdot u\notin Q\cdot v\cdot w\cdot u$ and thus lead to a desired conclusion $Q\cdot u\supsetneq Q\cdot v\cdot w\cdot u$. Such a word $w$ is found immediately if $A$ and $S:=Q\cdot v$ satisfy the assumptions of Theorem~\ref{thrSz1}, because the second possibility of its conclusion means that $|Q\cdot v\cdot w|<|Q \cdot v|$ and contradicts the assumption of the current theorem.

As to the assumptions of Theorem~\ref{thrSz1}, the one in the second sentence holds because our automaton is synchronizing. In particular, there should be a letter $b\in\Sigma$ such that $A\cdot b\neq A$. So if $A$ was equal to $Q\cdot v$, then we could have taken $w=b$ and proceed as in the previous paragraph, and, similarly, if $A$ was not a subset of $Q\cdot v$ at all, we could have taken $w$ to be the empty word and do the same thing.
%For a state $\sigma$ in $Q$, we denote by $\sigma\cdot u^{-1}$ the preimage of $\sigma$ under the mapping $q\to q\cdot u$. Let $A\subset Q\cdot u$ be the union of all those preimages $\sigma\cdot u^{-1}$ which are singleton sets; according to Lemma~7 in~\cite{Sz}, one has $|A|\geqslant n-2r$. Using Theorem~\ref{thrSz1} with $S=Q\cdot v$, one finds a word $w$ of length at most $2r$ such that either (1) $A\nsubseteq Q\cdot v\cdot w$ or (2) $|Q\cdot v\cdot w|<|Q \cdot v|$. The possibility (2) contradicts the assumption of the current theorem, so there exists a word $a$ which belongs to $A$ but not to $Q \cdot v\cdot w$, which implies $a\cdot u\notin Q\cdot v\cdot w\cdot u$ and thus $Q\cdot u\neq Q\cdot v\cdot w\cdot u$.
\end{proof}

One more theorem is needed before we can proceed to counting --- we cannot improve on the Pin--Frankl bound without using the Pin--Frankl bound.

\begin{thr}\cite{Frankl, Pin2}\label{thrPF}
Let $u\in\Sigma^*$ be a word of length $l$ and corank $r\leqslant n-2$. Then there is a word of length at most $l+(r+1)(r+2)/2$ and corank at least $r+1$.
\end{thr}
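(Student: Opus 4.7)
The plan is to set $S := Q \cdot u$ and reduce the problem to finding a word $w$ of length at most $(r+1)(r+2)/2$ with $|S \cdot w| < |S|$; then $uw$ has length at most $l + (r+1)(r+2)/2$ and rank at most $n-r-1$, hence corank at least $r+1$. Since $\mathcal{A}$ is synchronizing and $|S| = n-r \geq 2$, some word does shrink $S$, so it suffices to bound the length of a shortest such word by $\binom{r+2}{2}$.

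First I would fix a shortest word $w = a_1 a_2 \cdots a_m$ with $|S \cdot w| < |S|$ and track the trajectory $S_i := S \cdot a_1 \cdots a_i$, noting that $|S_i| = k := n-r$ for $0 \leq i \leq m-1$ while $|S_m| < k$. At step $m$ the letter $a_m$ collapses some pair $p_{m-1} \neq q_{m-1}$ in $S_{m-1}$. Tracing this collision backwards along the bijections that each $a_{i+1}$ induces from $S_i$ onto $S_{i+1}$ yields distinguished pairs $\{p_i, q_i\} \subseteq S_i$ with $p_i \neq q_i$ for every $0 \leq i \leq m-1$.

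Next I would encode each index as a set pair $A_i := \{p_i, q_i\}$ and $B_i := Q \setminus S_i$, so that $|A_i| = 2$ and $|B_i| = r$, and invoke the skew Bollob\'as set-pair inequality to conclude $m \leq \binom{2+r}{2} = (r+1)(r+2)/2$. The disjointness $A_i \cap B_i = \varnothing$ is immediate from $\{p_i, q_i\} \subseteq S_i$. The crucial cross-intersection $A_i \cap B_j \neq \varnothing$ for $j < i$ --- equivalently $\{p_i, q_i\} \not\subseteq S_j$ --- is established by a shortcut argument: if $\{p_i, q_i\} \subseteq S_j$, then since $a_{i+1} \cdots a_m$ merges $p_i$ and $q_i$, the word $a_1 \cdots a_j \cdot a_{i+1} \cdots a_m$ of length $m - (i-j) < m$ would already shrink $S$, contradicting the minimality of $w$.

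The main obstacle will be setting up the right ``certificate'' at each index so that the shortcut argument produces a strictly shorter word; after that, the desired bound on $m$ follows by a direct application of the classical Bollob\'as/Frankl skew set-pair inequality with parameters $a = 2$ and $b = r$, and concatenation with $u$ delivers the theorem.
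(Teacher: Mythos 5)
Your proof is correct and is exactly the classical Frankl--Pin argument that the paper itself does not reproduce but cites for this theorem (see \cite{Frankl, Pin2}). The reduction to shrinking $S=Q\cdot u$ by one, the backward-traced collision pair $\{p_i,q_i\}$ through the bijections $S_i\to S_{i+1}$, the shortcut argument establishing the skew cross-intersection condition, and the application of the skew Bollob\'as/Frankl set-pair inequality with parameters $a=2$, $b=r$ giving $m\leqslant\binom{r+2}{2}=(r+1)(r+2)/2$ are all as in the original references.
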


\section{Counting}

As Theorem~\ref{thrmy2} suggests, we are going to study the gaps between the smallest lengths of words with ranks taking consecutive pairs of values. Formally speaking, we denote by $\lambda_i$ the smallest length of a word with corank at least $i\in\{0,\ldots,n-1\}$; we obviously have $0=\lambda_0\leqslant\ldots\leqslant\lambda_{n-1}=\operatorname{rt}(\mathcal{A})$. We also write $\lambda_n=+\infty$ and define $\rho$ as the smallest corank satisfying $\lambda_{\rho+1}-\lambda_\rho>n$.

\begin{obs}\label{obsmy5}
We have $\lambda_\rho<n^2$.
\end{obs}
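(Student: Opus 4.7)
The plan is to combine two elementary facts that both follow directly from the definitions: first, by the minimality of $\rho$, every earlier gap $\lambda_{i+1}-\lambda_i$ with $i<\rho$ must be at most $n$; second, the index $\rho$ itself cannot exceed $n-1$. A one-line telescoping argument then yields $\lambda_\rho \leqslant (n-1)n < n^2$.

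First, I would verify that $\rho$ is well defined and that $\rho \leqslant n-1$. Because $\lambda_n$ is taken to be $+\infty$, the gap $\lambda_n - \lambda_{n-1}$ is certainly greater than $n$; hence the set of coranks satisfying $\lambda_{\rho+1}-\lambda_\rho>n$ is nonempty, and its minimum lies in $\{0,1,\ldots,n-1\}$.

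Second, I would invoke the defining minimality of $\rho$: for every integer $i$ with $0 \leqslant i < \rho$, the gap $\lambda_{i+1}-\lambda_i$ must be at most $n$, since otherwise $i$ itself would witness the defining property and contradict the minimality of $\rho$. Telescoping then gives
$$\lambda_\rho \;=\; \lambda_\rho - \lambda_0 \;=\; \sum_{i=0}^{\rho - 1} (\lambda_{i+1} - \lambda_i) \;\leqslant\; \rho\, n \;\leqslant\; (n-1)n \;<\; n^2,$$
which is the claim.

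There is essentially no obstacle here: the statement is a bookkeeping consequence of the definitions of $\rho$ and of the sequence $(\lambda_i)$, with the only mildly nontrivial point being the upper bound $\rho \leqslant n-1$, which is immediate from the convention $\lambda_n = +\infty$.
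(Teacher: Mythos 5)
Your proof is correct and is exactly the argument the paper implicitly relies on (the observation is stated without proof there): the minimality of $\rho$ bounds each earlier gap by $n$, the convention $\lambda_n=+\infty$ gives $\rho\leqslant n-1$, and telescoping yields $\lambda_\rho\leqslant\rho n\leqslant(n-1)n<n^2$.
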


Further, we set $\delta_j=\lambda_{j+1}-\lambda_{j}$ for any $j\in\{0,\ldots,\rho\}$, and, for any integer $r\leqslant n/2$, we define the quantity $s_r$ as the number of those $j\in\{0,\ldots,\rho\}$ which satisfy $\delta_j\in\{2r-1,2r\}$. Let us translate Theorems~\ref{thrmy2} and~\ref{thrPF} to this language.

\begin{thr}\label{thrmy4}
Let $u\in\Sigma^*$ be a word of length $l$ and corank $r\in[1, n/2-1]$. Then there is a word of corank at least $r+1$ and length not exceeding
$$l+\min\left\{\frac{(r+1)(r+2)}{2},\,\,\,2(s_1+2s_2+3s_3+\ldots+rs_r)+2r\right\}.$$
\end{thr}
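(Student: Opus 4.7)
The first option in the minimum, $(r+1)(r+2)/2$, is an immediate application of Theorem~\ref{thrPF} to the word $u$. The real task is to derive the second option through Theorem~\ref{thrmy2}, and my plan is to feed that theorem the parameter $\lambda:=\lambda_{j^*}$, where $j^*$ denotes the smallest $j\in\{0,\ldots,\rho\}$ with $\delta_j\geqslant 2r+1$, together with any shortest word $v$ of corank at least $j^*$ (so $|v|=\lambda_{j^*}$).

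First I would confirm that $j^*$ is well defined. Since $r\leqslant n/2-1$, we have $2r+1\leqslant n-1<n<\delta_\rho$ by the definition of $\rho$, so at the very least $j=\rho$ qualifies as a candidate, and hence $j^*$ exists and is $\leqslant\rho$. Next I would verify the rank-minimality hypothesis of Theorem~\ref{thrmy2}: if some word $v'$ of length at most $\lambda_{j^*}+2r$ had $\operatorname{rk}v'<\operatorname{rk}v$, then the corank of $v'$ would exceed that of $v$ and so be at least $j^*+1$, forcing $|v'|\geqslant\lambda_{j^*+1}=\lambda_{j^*}+\delta_{j^*}\geqslant\lambda_{j^*}+2r+1$, a contradiction. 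Theorem~\ref{thrmy2} then delivers a word of corank at least $r+1$ whose length does not exceed $l+\lambda_{j^*}+2r$.

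The remaining step is purely counting: I would bound $\lambda_{j^*}=\sum_{i=0}^{j^*-1}\delta_i$ by grouping the summands according to their size. By minimality of $j^*$, every $\delta_i$ with $i<j^*$ satisfies $\delta_i\leqslant 2r$, so each such index either contributes nothing (when $\delta_i=0$) or falls in exactly one class $\{2k-1,2k\}$ with $k\in\{1,\ldots,r\}$; by the definition of $s_k$, the number of indices $i\in\{0,\ldots,\rho\}$ landing in class $k$ is at most $s_k$, and each contributes at most $2k$ to the sum. Summing these contributions yields $\lambda_{j^*}\leqslant 2s_1+4s_2+\ldots+2rs_r=2(s_1+2s_2+\ldots+rs_r)$, and combined with the previous paragraph this gives the second branch of the asserted bound.

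The argument contains no genuine difficulty; the only obstacle is bookkeeping, namely matching the partition of the $\delta_i$'s to the definition of $s_k$ and using $r\leqslant n/2-1$ both to guarantee $j^*\leqslant\rho$ and to apply Theorem~\ref{thrmy2}.
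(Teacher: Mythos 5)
Your proof is correct and follows essentially the same route as the paper's: apply Theorem~\ref{thrPF} for the first branch, and for the second branch take $v$ to be a shortest word of corank $\tau$ where $\tau$ is the first index with $\delta_\tau>2r$, verify the minimality hypothesis of Theorem~\ref{thrmy2} via $\delta_\tau\geqslant 2r+1$, and bound $\lambda_\tau$ by grouping the preceding $\delta_i\leqslant 2r$ into the classes counted by $s_1,\ldots,s_r$. Your write-up merely makes explicit the bookkeeping (well-definedness of $\tau$, the contradiction argument for the rank-minimality hypothesis) that the paper leaves terse.
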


\begin{proof}
%We are allowed to put first argument of $\min$ by Theorem~\ref{thrPF} immediately. In order to explain the second argument by an application of Theorem~\ref{thrmy2}, we need a word $v$ of length $h\leqslant 2(s_1+2s_2+3s_3+\ldots+rs_r)$ whose rank does not exceed the rank of any word of length at most $h+2r$. According to our notation, we can pick $v$ to be a word of corank $\tau$ and length $\lambda_\tau$, where $\tau$ is the minimal index for which $\delta_\tau$ exceeds $2r$.
We are allowed to put the first argument of $\min$ by Theorem~\ref{thrPF} immediately. Further, let us pick a word $v$ of corank $\tau$ and length $\lambda_\tau$, where $\tau$ is the minimal index for which $\delta_\tau$ exceeds $2r$. The length of $v$ does not exceed the sum of all the $\delta_j$'s not exceeding $2r$, which is at most $2(s_1+2s_2+3s_3+\ldots+rs_r)$. Also, we cannot get a word of rank less than $\operatorname{rk} v$ unless we take $\delta_\tau>2r$ more letters than $v$ has --- therefore, we can apply Theorem~\ref{thrmy2} and justify the second argument of $\min$.
\end{proof}

\begin{cor}\label{cormy6}
The reset threshold of $\mathcal{A}$ does not exceed
$$\frac{7}{48}n^3+2\mathlarger{\sum}\limits_{r=\rho}^{\lfloor n/2\rfloor}\min\left\{\frac{r^2}{4},1s_1+\ldots+rs_r\right\}+3n^2.$$
\end{cor}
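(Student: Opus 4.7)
The plan is to telescope $\operatorname{rt}(\mathcal{A}) = \lambda_{n-1} = \lambda_\rho + \sum_{r=\rho}^{n-2}(\lambda_{r+1} - \lambda_r)$, handle $\lambda_\rho$ directly via Observation~\ref{obsmy5}, and split the remaining sum at $\lfloor n/2 \rfloor$ so that the two available tools may each be used in the regime where they are strongest: Theorem~\ref{thrmy4} on $r \in [\rho, \lfloor n/2\rfloor - 1]$, where its hypothesis $r \leq n/2 - 1$ is met, and the Pin--Frankl Theorem~\ref{thrPF} on the tail $r \in [\lfloor n/2\rfloor, n-2]$.

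For the tail, Theorem~\ref{thrPF} gives $\lambda_{r+1} - \lambda_r \leq (r+1)(r+2)/2$, and a direct evaluation of the resulting arithmetic sum -- comparable to the integral $\int_{n/2}^n x^2/2 \, dx = 7n^3/48$ -- shows that the total is $\frac{7}{48}n^3 + O(n^2)$, which accounts for the leading coefficient of the target bound. For the middle range, Theorem~\ref{thrmy4} delivers
$$\lambda_{r+1} - \lambda_r \;\leq\; 2\min\left\{\frac{(r+1)(r+2)}{4},\; s_1 + 2s_2 + \cdots + rs_r + r\right\},$$
and I would rewrite this using $(r+1)(r+2)/4 = r^2/4 + O(r)$ together with the elementary inequality $\min\{A+\alpha, B+\beta\} \leq \min\{A, B\} + \max(\alpha, \beta)$ to pull an $O(r)$ error outside the min, producing $2\min\{r^2/4,\, s_1 + \cdots + rs_r\} + O(r)$ per term. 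Summed across the middle range, these $O(r)$ errors contribute $O(n^2)$ in aggregate, and widening the upper summation limit from $\lfloor n/2 \rfloor - 1$ to $\lfloor n/2 \rfloor$ is free because one simply adds a nonnegative term.

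Assembling the three pieces yields $\operatorname{rt}(\mathcal{A}) \leq \frac{7}{48}n^3 + 2\sum\min\{\cdots\} + (\text{leftover})$, and the remaining task is bookkeeping: checking that the leftovers -- namely $\lambda_\rho < n^2$ from Observation~\ref{obsmy5}, the $O(n^2)$ slack in the Pin--Frankl tail (a brief computation suggests this is actually negative, of order $-n^2/8$), and the accumulated $O(r)$ corrections in the middle range (of order $n^2/4$) -- together fit into the $3n^2$ budget. A rough accounting gives on the order of $9n^2/8$ of additive leftover, so $3n^2$ is very generous; I expect the only slightly fiddly point will be keeping the parity-dependent boundary terms at $r = \lfloor n/2 \rfloor$ consistent across the split between the two tools.
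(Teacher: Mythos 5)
Your proposal is correct and follows essentially the same route as the paper: a word of corank $\geqslant\rho$ of length $<n^2$ from Observation~\ref{obsmy5}, successive application of Theorem~\ref{thrmy4} (equivalently, your telescoping of $\lambda_{r+1}-\lambda_r$) up to corank $\lfloor n/2\rfloor$, and the Pin--Frankl tail contributing the $\tfrac{7}{48}n^3$ term, with the $O(r)$ discrepancies inside the minimum absorbed into the $3n^2$ slack. Your bookkeeping of the leftover terms is in fact more explicit than the paper's one-line remark about ``isolating the $O(n^2)$ terms.''
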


\begin{proof}
We use Observation~\ref{obsmy5} to get a word of corank at least $\rho$ and length at most $n^2$, then we upgrade it to a word of rank $\leqslant\lceil n/2\rceil$ by a successive application of Theorem~\ref{thrmy4}, and then we construct a synchronizing word by Theorem~\ref{thrPF} (the cost of this last step is $\leqslant7n^3/48$ additional letters). Also, the expressions under the minimum were simplified by isolating the $O(n^2)$ terms in the last summand.
\end{proof}

The following statement is going to complete the proof of our main result.

\begin{stat}\label{mystat7}
Let $n$ and $\rho<n/2$ be positive integers, let $k=\lfloor n/2\rfloor$. Let $s_1,\ldots,s_k$ be nonnegative real numbers satisfying $s_1+\ldots+s_k\leqslant\rho$. Then
\[\varphi(s_1,\ldots,s_k):=\mathlarger{\sum}\limits_{r=\rho}^{k}\min\left\{\frac{r^2}{4},1s_1+\ldots+rs_r\right\}\leqslant \frac{15\,625\,n^3}{1\,597\,536}+o(n^3).\tag{1} \label{eq1}\]
%\[\varphi(s_1,\ldots,s_k)=\mathlarger{\sum}\limits_{r=\rho}^{\lfloor n/2\rfloor}\min\left\{\frac{r^2}{4},1s_1+\ldots+rs_r\right\}\leqslant \frac{15\,625\,n^3}{1\,597\,536}+o(n^3).\]
\end{stat}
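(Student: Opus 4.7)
The function $\varphi$ is concave in $s$ (each summand is a minimum of two affine functions of $s$) on the convex feasible set $\{\sum s_i \le \rho,\ s_i \ge 0\}$, so the supremum is attained and amenable to a Lagrangian-duality bound. My plan is to start from the pointwise inequality $\min(r^2/4, L_r) \le \alpha_r r^2/4 + (1-\alpha_r) L_r$ (valid for any $\alpha_r \in [0,1]$), sum over $r$, and apply $\sum_i i s_i A_i \le \rho \max_i iA_i$ to obtain
\[\varphi \le \sum_{r=\rho}^k \alpha_r r^2/4 + \rho \max_{1 \le i \le k} iA_i, \qquad A_i := \sum_{r=\max(i,\rho)}^k (1-\alpha_r).\]

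The pivotal step will be choosing the weights $\alpha_r$ so that $iA_i$ is essentially constant on the range of $i$ that matters. Guided by the telescoping identity $1/(r(r+1)) = 1/r - 1/(r+1)$, I would try $\alpha_r = 1 - \mu/(r(r+1))$ on $[\rho, b]$ and $\alpha_r = 0$ on $(b, k]$; a short calculation then gives $iA_i = \mu + i[(k-b) - \mu/(b+1)]$ for $i \in [\rho, b]$. Setting $\mu = (b+1)(k-b)$ forces $iA_i \equiv \mu$ on this range, and $iA_i \le \mu$ also holds for $i > b$ as long as $b \ge (k-1)/2$; dual feasibility $\alpha_r \ge 0$ becomes $\mu \le \rho(\rho+1)$. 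Evaluating $\sum_r \alpha_r r^2/4$ via the same telescoping yields
\[\varphi \le (b^3 - \rho^3)/12 + b(k-b)(5\rho - b)/4 + o(n^3).\]
Viewed as a cubic in $b$, this has critical points $b = 5\rho/2$ and $b = k/2$; for $\rho$ such that $b = 5\rho/2$ is in $[\rho, k]$ and dual-feasible (roughly $\rho \in [5n/29, n/5]$), that choice is the minimum and simplifies the bound to $\varphi \le \rho^2(25k - 43\rho)/16 + o(n^3)$. Maximizing this cubic in $\rho$ yields the critical point $\rho^* = 50k/129 = 25n/129$, at which the bound equals exactly $15625n^3/1597536$.

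For $\rho$ outside this main regime, simpler estimates suffice: if $\rho \ge n/5$, then $\varphi \le (k^3 - \rho^3)/12 \le 117n^3/12000 < 15625n^3/1597536$; if $\rho \le n/6$, then $\varphi \le \sum_r L_r \le k^2\rho/4$ (from $L_r \le L_k \le k\rho$ and $\max_{i \ge \rho}(k-i)i = k^2/4$) sits below the target; the short intermediate intervals of $\rho$ can be handled by taking $\mu = \rho(\rho+1)$ (feasibility binding) in the above dual and choosing $b$ correspondingly, with the resulting bound still lying safely below $15625n^3/1597536$. The main obstacle is pinpointing the telescoping weights $\alpha_r = 1 - \mu/(r(r+1))$; once that is in hand, the rest is calculus combined with case-by-case verification of feasibility across the regimes of $\rho$.
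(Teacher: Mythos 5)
Your argument is correct in its essentials and reaches the exact constant $15\,625/1\,597\,536$, but by a genuinely different route from the paper. The paper works on the primal side: it first normalizes the tuple (Claim~\ref{cl1}), then shows by a perturbation argument (Claim~\ref{cl2}) that at a maximizer the partial sums $1\sigma_1+\ldots+r\sigma_r$ equal $r^2/4$ throughout a middle range of indices, which forces $\sigma_r=1/2-1/(4r)$ there; combining this with $\sigma_1+\ldots+\sigma_k\leqslant\rho$ yields a linear constraint between the first and last nonzero indices $\beta,\gamma$, and the problem collapses to maximizing an explicit cubic $\psi(\beta,\gamma)$ over a quadrilateral, with optimum at $(\beta,\gamma)=(25n/129,\,125n/258)+o(n)$. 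Your dual certificate $\alpha_r=1-\mu/(r(r+1))$ with $\mu=(b+1)(k-b)$ is exactly the multiplier that is tight on the paper's extremal configuration (your $b$ plays the role of the paper's $\gamma=5\rho/2$, and your optimal $\rho^{\ast}=25n/129$ is the paper's $\beta$), so the two computations agree. What the dual buys you is an explicit upper-bound certificate for each fixed $\rho$ with no need to analyze the structure of the maximizer; the price is a case analysis over regimes of $\rho$ with feasibility checks in each.

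One numerical slip needs fixing: in the regime you label $\rho\leqslant n/6$, the estimate $\varphi\leqslant k^2\rho/4=\rho n^2/16$ equals $n^3/96\approx 0.01042\,n^3$ at $\rho=n/6$, which \emph{exceeds} the target $15\,625\,n^3/1\,597\,536\approx 0.00978\,n^3$; that crude bound only suffices for $\rho\lesssim 0.1565\,n$. This is harmless, because your binding-feasibility dual ($\mu=\rho(\rho+1)$, $b$ the larger root of $b(k-b)=\mu$) does cover the whole interval from about $0.156\,n$ up to $5n/29\approx 0.1724\,n$ --- for instance at $\rho=0.165\,n$ it gives roughly $0.00926\,n^3$ --- so you need only move the boundary between your third and fourth cases. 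The remaining checks (dual feasibility $\mu\leqslant\rho(\rho+1)$ for $\rho\geqslant 5n/29$ when $b=5\rho/2$, the inequality $117/12000<15\,625/1\,597\,536$ in the regime $\rho\geqslant n/5$, and $iA_i\leqslant\mu$ for $i>b$ when $b\geqslant(k-1)/2$) all hold as you state, and the telescoping evaluation of $\sum_r\alpha_r r^2/4$ introduces only an $O(n^2\log n)$ error, consistent with the $o(n^3)$ term.
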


The numbers $s_1,\ldots,s_k$ appearing in Corollary~\ref{cormy6} are clearly nonnegative and have the sum not exceeding $\rho$, so we can apply Proposition~\ref{mystat7} and get
\[\left(\frac{7}{48}+\frac{2\cdot 15\,625}{1\,597\,536}\right)n^3+o(n^3)\tag{B}\label{the bound}\]
or $0.1654n^3+\operatorname{O}(1)$ as an upper bound for the reset threshold of $\mathcal{A}$.

\section{Proving Proposition~\ref{mystat7}}

The last section is devoted to a solution of the optimization problem appearing in Proposition~\ref{mystat7}. First, we restrict our attention to a certain special case.

\begin{claim}\label{cl1}
It is sufficient to prove Proposition~\ref{mystat7} under the additional assumptions of $s_1=\ldots=s_{\rho-1}=0$ and
\[1s_1+\ldots+\tau s_\tau\leqslant \tau^2/4\,\,\,\,\,\,\,\,\mbox{for all $\tau\in\{\rho,\ldots,k\}$} \tag{$2_\tau$} \label{1tau}\]
(where the latter says that the minimum is always attained at the second argument).
\end{claim}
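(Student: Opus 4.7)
The plan is to give a value-preserving reduction: for any feasible $(s_1,\ldots,s_k)$ satisfying the hypotheses of Proposition~\ref{mystat7}, I will construct a new feasible tuple $(\tilde s_1,\ldots,\tilde s_k)$ satisfying the two additional assumptions of the claim and with $\varphi(\tilde s)\geq\varphi(s)$. The general case of Proposition~\ref{mystat7} then follows from the reduced case applied to $\tilde s$.

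To enforce $s_1=\ldots=s_{\rho-1}=0$, I would reassign the mass of each $s_j$ with $j<\rho$ to $s_\rho$. This preserves $\sum s_i$, and for every $r\geq\rho$ the partial sum $P_r:=1s_1+\ldots+rs_r$ changes by $(\rho-j)s_j\geq 0$; since $\min(r^2/4,\cdot)$ is non-decreasing in its second argument, $\varphi$ weakly increases.

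To enforce \eqref{1tau}, I would process $\tau=\rho,\rho+1,\ldots,k$ in order and apply the following move whenever the current $P_\tau$ exceeds $\tau^2/4$: setting $\Delta:=(P_\tau-\tau^2/4)/\tau$, decrease $s_\tau$ by $\Delta$, and for $\tau<k$ simultaneously increase $s_{\tau+1}$ by $\tfrac{\tau}{\tau+1}\Delta$ (for $\tau=k$ no compensation is possible or needed). The compensation ratio $\tau/(\tau+1)$ is chosen precisely so that the perturbations $-\tau\Delta$ at index $\tau$ and $+(\tau+1)\cdot\tfrac{\tau}{\tau+1}\Delta=+\tau\Delta$ at index $\tau+1$ cancel inside every $P_r$ with $r>\tau$; thus the move drops $P_\tau$ to exactly $\tau^2/4$ while fixing every other partial sum. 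Because $P_\tau>\tau^2/4$ before the move forces $\min(\tau^2/4,P_\tau)=\tau^2/4$ both before and after, the value of $\varphi$ is preserved. Feasibility is immediate: by induction on $\tau$ one has $P_{\tau-1}\leq(\tau-1)^2/4$ at the start of step $\tau$ (either because $\tau-1<\rho$ and the partial sum vanishes, or because the previous step established it), so $\tau s_\tau=P_\tau-P_{\tau-1}>P_\tau-\tau^2/4=\tau\Delta$ gives $s_\tau\geq\Delta$, while $\sum s_i$ only decreases (by $\Delta/(\tau+1)$, or by $\Delta$ when $\tau=k$). Since the moves at later indices do not alter $P_\tau$ for earlier $\tau$, the constraints \eqref{1tau} hold simultaneously at the end.

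The main obstacle is discovering the correct compensation ratio $\tau/(\tau+1)$: with it in hand one can cap a single partial sum in isolation, and the remaining feasibility checks together with the invariance of $\varphi$ reduce to a one-line computation. Without it, the natural alternative of simply truncating $s_\tau$ propagates the truncation into every higher $P_r$ and can strictly decrease $\varphi$, so a more delicate global adjustment would otherwise be required.
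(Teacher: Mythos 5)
Your proof is correct and follows essentially the same strategy as the paper: first push the mass at indices below $\rho$ into $s_\rho$, then sweep through $\tau=\rho,\ldots,k$ shifting mass from $s_\tau$ to $s_{\tau+1}$ to cap each partial sum at $\tau^2/4$, checking nonnegativity via the induction $P_{\tau-1}\leqslant(\tau-1)^2/4$. The only (harmless) difference is the compensation ratio: the paper moves equal amounts between $s_\tau$ and $s_{\tau+1}$, preserving $\sum s_i$ while letting the higher partial sums grow, whereas you use the ratio $\tau/(\tau+1)$, freezing the higher partial sums while letting $\sum s_i$ shrink; both keep the tuple feasible and do not decrease $\varphi$.
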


\begin{proof}
Let us define $s_r'=0$ for $r<\rho$, $s_r'=s_r$ for $r>\rho$, and $s_\rho'=(1s_1+\ldots+\rho s_\rho)/\rho$. The new values are nonnegative and sum to at most $s_1+\ldots+s_k\leqslant\rho$, so they satisfy the assumptions of Proposition~\ref{mystat7}. Also, we have $1s_1+\ldots+rs_r=1s_1'+\ldots+rs_r'$ for all $r\geqslant\rho$, so the arguments of the minimuma do not change, and we can pass to $(s_1',\ldots,s_k')$ without loss of generality.

Now, for a tuple $s=(s_1,\ldots,s_k)$ not satisfying one of the conditions~\eqref{1tau}, we define $t(s)$ as the smallest $\tau$ for which it fails. In other words, we have $$\alpha:=1s_1+\ldots+ts_t>t^2/4\,\,\,\,\,\,\mbox{and}\,\,\,\,\,\,1s_1+\ldots+rs_r\leqslant r^2/4\,\,\,\,\,\,\mbox{for all}\,\,\,\,\,\,r<t.$$ Then we set $s_t'=s_t+t/4-\alpha/t$ and $s_r'=s_r$ for $r\notin\{\tau,\tau+1\}$,
and also, if $t\neq k$, we define $s_{t+1}'=s_{t+1}-t/4+\alpha/t$. The values $s_r'$ are again nonnegative and sum to at most $s_1+\ldots+s_k\leqslant\rho$, so they satisfy the assumptions of Proposition~\ref{mystat7}. Also, we have $\varphi(s')\geqslant\varphi(s)$ because the summands with $r\in[\rho,t]$ did not change while the $(t+1)$st and later summands could not have decreased. Finally, we note that, even if $s'$ still does not satisfy some of the conditions~\eqref{1tau}, we still can prove the second statement of the current theorem by induction because $t(s')>t(s)$.\end{proof}

From now on, we assume that the conditions $s_1=\ldots=s_{\rho-1}=0$ and~\eqref{1tau} hold; we also recall that $s_r\geqslant0$ and $s_1+\ldots+s_k\leqslant\rho$. We call the set of all tuples $(s_1,\ldots,s_k)$ satisfying these conditions a \textit{feasible set}; Claim~\ref{cl1} allows us to restrict Proposition~\ref{mystat7} to it. 
%$$\varphi(s_1,\ldots,s_k)=\sum\limits_{r=\rho}^k (k-r+1)r s_r.$$
The feasible set is compact (for any fixed $\rho$), so the function $\varphi$ should have a maximum point $\sigma=(0,\ldots,0,\sigma_\rho,\ldots,\sigma_k)$. Let $\beta$ and $\gamma$ be, respectively, the minimal and maximal indices $i$ satisfying $\sigma_i\neq0$. 

\begin{claim}\label{cl2}
We have $1\sigma_1+\ldots+r\sigma_r=r^2/4$ for all $r\in\{\beta+1,\ldots,\gamma-1\}$.
\end{claim}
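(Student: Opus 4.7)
The plan is to argue by contradiction: assume there exists $r_0 \in \{\beta+1,\ldots,\gamma-1\}$ with $\sum_{i\le r_0} i\sigma_i < r_0^2/4$, and construct a feasible perturbation of $\sigma$ that strictly increases $\varphi$, contradicting maximality. The first observation I need is that, under the restricting assumptions of Claim~\ref{cl1}, the minimum in each summand of $\varphi$ is attained at the second argument, and consequently
\[
\varphi(s) = \sum_{r=\rho}^{k}\sum_{i=\rho}^{r} i s_i = \sum_{i=\rho}^{k} i(k-i+1)\, s_i,
\]
so $\varphi$ is linear in $s$ with coefficient $c_i := i(k-i+1)$ on $s_i$.

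Next I would let $j$ be the smallest index with $j>r_0$ and $\sigma_j>0$; since $\sigma_\gamma>0$ and $r_0\le \gamma-1$, this $j$ exists and satisfies $j\le \gamma$. By the choice of $j$ one has $\sigma_r=0$ for every $r\in(r_0,j)$, so the partial sums $\sum_{i\le r}i\sigma_i$ are constant on $[r_0,j-1]$, equal to $\sum_{i\le r_0}i\sigma_i < r_0^2/4 \le r^2/4$; thus condition~\eqref{1tau} is strict at every $r\in[r_0,j-1]$, which gives me room to temporarily inflate the partial sums there.

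The perturbation I would use is the three-index move
\[
\Delta_\beta = -\delta_0,\qquad \Delta_{r_0}=\delta_0\cdot\frac{j-\beta}{j-r_0},\qquad \Delta_j=-\delta_0\cdot\frac{r_0-\beta}{j-r_0},
\]
with $\Delta_i=0$ otherwise and $\delta_0>0$ small enough that $\sigma_\beta+\Delta_\beta\ge 0$ and $\sigma_j+\Delta_j\ge 0$ (both feasible since $\sigma_\beta,\sigma_j>0$). A direct check shows that $\sum_i\Delta_i=0$ (the sum constraint is preserved), $\sum_{i\le r} i\Delta_i=0$ for every $r\ge j$, the partial-sum change is strictly negative on $[\beta,r_0-1]$ (so~\eqref{1tau} is preserved there), and on $[r_0,j-1]$ it is a positive $O(\delta_0)$ amount absorbed by the strict slack found in the previous paragraph. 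Hence $\sigma+\Delta$ remains in the feasible set.

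Finally, I would compute $\sum_i c_i\Delta_i$. Writing $a=\beta$, $b=r_0$, $c=j$, and $m=k+1$, this equals $\delta_0/(c-b)$ times
\[
-a(m-a)(c-b) + b(m-b)(c-a) - c(m-c)(b-a).
\]
The $m$-linear part cancels identically, and after substituting $b=a+p$, $c=a+p+q$ with $p,q>0$, the $m$-free part collapses to $pq(p+q) = (r_0-\beta)(j-r_0)(j-\beta)$, so the change in $\varphi$ equals $\delta_0(r_0-\beta)(j-\beta)>0$, contradicting maximality. The only delicate point in this argument is ensuring that no tight partial-sum constraint is violated by the perturbation, which is precisely why I chose $j$ to be the first nonzero index strictly above $r_0$: the gap $(r_0,j)$ is exactly the region where condition~\eqref{1tau} has strict slack, so small positive perturbations of the partial sums there are harmless.
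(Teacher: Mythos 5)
Your proof is correct and follows essentially the same strategy as the paper's: a three-index, sum-preserving perturbation of $\sigma$ that leaves all partial sums $\sum_{i\leqslant r} i\sigma_i$ unchanged for $r$ beyond the last perturbed index, uses the strict slack in~\eqref{1tau} at the violating index to stay feasible, and strictly increases the linear functional $\varphi$ by the strict concavity of the coefficients $i(k-i+1)$. The only difference is bookkeeping: the paper takes the \emph{maximal} violating index $\nu$ and perturbs at $\beta,\nu,\nu+1$ (maximality guaranteeing $\sigma_{\nu+1}>0$), while you take an arbitrary violating $r_0$ and jump to the next support point $j$, which makes the nonnegativity of $\sigma_j+\Delta_j$ automatic.
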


\begin{proof}
Assume the converse and find the maximal $\nu\in\{\beta+1,\ldots,\gamma-1\}$ for which $1s_1+\ldots+\nu s_\nu<\nu^2/4$. Then we pick a sufficiently small $\varepsilon>0$ and define $$\sigma'_\beta=\sigma_\beta-\varepsilon,\,\,\,\,\ \sigma'_\nu=\sigma_\nu+\varepsilon(\nu-\beta+1),\,\,\,\,\,\sigma'_{\nu+1}=\sigma_{\nu+1}-\varepsilon(\nu-\beta)$$
and $\sigma'_r=\sigma_r$ for $r\notin\{\beta,\nu,\nu+1\}$. The tuple $\sigma'=(\sigma_1',\ldots,\sigma_k')$ sums to $\sigma_1+\ldots+\sigma_k\leqslant\rho$, and its coordinates are nonnegative for a sufficiently small $\varepsilon$. Further, the sum as in~\eqref{1tau} could not have increased for $\tau<\nu$; the same sum but with $\tau=\nu$ has changed by something proportional to $\varepsilon$ and so it could not have overcome $\nu^2/4$. Finally, such a sum with $\tau>\nu$ did not change as we can check directly, so the tuple $\sigma'$ belongs to the feasible set. Finally, we have \[\varphi(\sigma)=\sum\limits_{r=\rho}^k (k-r+1)\,r\,\sigma_r\]%\tag{6}\label{eqeqeq}\] 
if $\sigma$ is in the feasible set, and one can get the inequality $\varphi(\sigma')>\varphi(\sigma)$, which contradicts the maximality of $\sigma$. (Such an inequality can be deduced by a straightforward computation, but let us point it out that it follows from the strict concavity of the sequence of the coefficients of $\sigma_\rho,\ldots,\sigma_k$ in the above expression for $\varphi$.) %~\eqref{eqeqeq}.
\end{proof}

Now we are going to employ Claim~\ref{cl2} to complete the proof of Proposition~\ref{mystat7}. First, we have $r\sigma_r=r^2/4-(r-1)^2/4$ or \[\sigma_r=0.5-0.25/r\,\,\,\,\mbox{for all}\,\,\,\,r\in\{\beta+2,\ldots,\gamma-1\}.\tag{3} \label{eq2}\]
Secondly, we have $\beta\sigma_\beta+(\beta+1)\sigma_{\beta+1}=(\beta+1)^2/4$ or \[\sigma_\beta+\sigma_{\beta+1}\geqslant0.25(\beta+1).\tag{4} \label{eq3}\] Summing the inequalities~\eqref{eq2} over all $r\in\{\beta+2,\ldots,\gamma-1\}$ and adding the inequality~\eqref{eq3}, we get at most $\sigma_1+\ldots+\sigma_k\leqslant\rho\leqslant\beta$ on the left-hand side, or
%\[\beta\geqslant \frac{\beta+1}{4}+\frac{\gamma-\beta-2}{2}-\frac{\ln n}{4},\tag{4} \label{eq4}\]
\[\beta\geqslant 0.25(\beta+1)+0.5(\gamma-\beta-2)-0.25 \ln n.\tag{5} \label{eq4}\]
Now we estimate $\varphi(\sigma)$ using Claim~\ref{cl2} again. Clearly, the summands corresponding to $r<\beta$ are zero in the definition~\eqref{eq1} of $\varphi$, the summands with $r\in\{\beta,\ldots,\gamma\}$ cannot exceed $0.25r^2$, and the summands with $r>\gamma$ are at most $0.25\gamma^2$. We get $\varphi(\sigma)\leqslant\sum_{r=\beta}^\gamma 0.25r^2+0.25(0.5n-\gamma)\gamma^2$, or
\[\varphi(\sigma)\leqslant\psi(\beta,\gamma):=(-2 \beta^3 - 4 \gamma^3 + 3 \gamma^2 n + 6 \gamma^2 + 6 \gamma + 2)/24.\]%\tag{6} \label{eq5}\]

It remains to maximize $\psi(\beta,\gamma)$ subject to $0\leqslant\beta\leqslant\gamma\leqslant0.5n$ and~\eqref{eq4}; these inequalities define a quadrilateral $\Delta$ with vertices $(0,0)$, $(0,0.5\ln n+1.5)$, $(0.5n,0.5n)$, $(0.2n-0.2\ln n-0.6,0.5n)$. Being strongly monotone in $\beta$, the function $\psi$ cannot have a maximum inside $\Delta$, so it remains to perform a basic calculus task and maximize $\psi$ on the edges. The computation shows that the maximum of $\psi$ on $\Delta$ is attained at the point $(25n/129,125n/258)+o(n)$ and confirms that it is equal to the right-hand side of~\eqref{eq1}. Therefore, the proof of our main result is complete.

\medskip

As a final remark, let us note that we were not interested to optimize the $o(n^3)$ term of our bound~\eqref{the bound}, but a direct computation of the previous paragraph gives an $O(n^2\log n)$ estimate of this term. A more careful application of Claim~\ref{cl2} would make it $O(n^2)$, with explicit and reasonably small coefficients of the powers of $n$.

\end{document}